
\makeatletter

\documentclass[12pt]%
{article}

\newsavebox{\savepar}

\usepackage{amsmath}
\usepackage{amsfonts}
\usepackage{euscript}
\usepackage{oldgerm}
\usepackage{amsthm}
\usepackage{rotating}
\usepackage{mathrsfs}
\usepackage{hyperref}
\usepackage{amssymb}
\usepackage{epsfig}
\usepackage{dsfont}
\usepackage{subfig}
   \oddsidemargin .2in

   \parindent .3in
   \pagestyle{myheadings}
   \topmargin .3in
   \topskip .2in
   \evensidemargin .2in    
   \textwidth 6.2in
   \textheight 8.2in

\makeindex

\newtheorem{corollary}{Corollary}

  \newtheorem{theorem}{Theorem}[section]

  \theoremstyle{definition}

  \theoremstyle{remark}
  \newtheorem{remark}[theorem]{Remark}

\theoremstyle{definition}

\theoremstyle{remark}


\begin{document}

\newcommand{\norm}[1]{\left\lVert #1\right\rVert}
\newcommand{\namelistlabel}[1]{\mbox{#1}\hfil}
\newenvironment{namelist}[1]{%
\begin{list}{}
{
\let\makelabel\namelistlabel
\settowidth{\labelwidth}{#1}
\setlength{\leftmargin}{1.1\labelwidth}
}
}{%
\end{list}}

\newcommand{\inp}[2]{\langle {#1} ,\,{#2} \rangle}
\newcommand{\vspan}[1]{{{\rm\,span}\{ #1 \}}}
\newcommand{\R} {{\mathbb{R}}}

\newcommand{\B} {{\mathbb{B}}}
\newcommand{\C} {{\mathbb{C}}}
\newcommand{\N} {{\mathbb{N}}}
\newcommand{\Q} {{\mathbb{Q}}}
\newcommand{\LL} {{\mathbb{L}}}
\newcommand{\Z} {{\mathbb{Z}}}

\newcommand{\BB} {{\mathcal{B}}}

\title{A note on the 2-dual space of $L^p[0,1]$  }
\author{ Akshay S. RANE \footnote{Department of Mathematics, Institute of Chemical Technology, Nathalal Parekh Marg, Matunga, Mumbai 400 019, India, email :  as.rane@ictmumbai.edu.in,} 
\hspace {1mm}
}
\date{ }
\maketitle
\begin{abstract}
  	In the present note, we are interested in bounded 2-functionals and 2-dual spaces of $L^p[0,1]$. The 2-dual spaces of the sequence space $l^p$ is considered in the literature. But interestingly an explicit computation of $Lp$ spaces has not been considered though n-duals of general normed spaces have been considered. We shall consider the 2-dual spaces with the usual $\|.\|_p$ norm and with respect to the G\"{a}hler and the Gunawan norm. The n-dual space of $L^p[0,1]$ can be treated in a similar manner.
\end{abstract}

\noindent
Key Words : $L^p[0,1]$, 2-normed space, 2-dual space.

\smallskip
\noindent
AMS  subject classification : 46B20, 46C05, 46C15,46B99,46C99

\newpage
\section{INTRODUCTION}
Let $n$ be a non-negative integer and $X$ be a vector space over $\mathbb{R}$ of dimension $d\geq n.$ An n-norm is a real valued function on $X^n$ satisfying the following properties.
\begin{enumerate}
	\item $\|x_1,\ldots,x_n\|=0$ if and only if $x_1,x_2,\cdots,x_n$ are linearly dependent.
	\item $\|x_1,\ldots,x_n\|$ is invariant under permutation.
	\item $\|\alpha x_1,\ldots,x_n\|=|\alpha| \|x_1,\ldots,x_n\|$ for $\alpha \in \mathbb{R}$.
	\item $\|x_1+x_1^\prime ,\ldots,x_n\|\leq \|x_1 ,\ldots,x_n\|+\|x_1^\prime ,\ldots,x_n\|$
\end{enumerate}
Then  $(X,\|.,.\|)$ is called as an n-normed space. Details about n-normed spaces can be found in  \cite{Chen}, \cite{Gun1}, \cite{Gun2}, \cite{Gun3}, and  \cite{Gun4}.
Let $1 \leq p < \infty$ and $q$ be the conjugate exponent of $p$ that is $\frac{1}{p}+ \frac{1}{q}=1.$ The sequence spaces $l^p$ have been studied in the literature. See \cite{Yos1}, \cite{Yos2} and \cite{Gun5}. Analogously
one can equip n-norm on the space $L^p[0,1]$, the space of all bounded measurable real valued functions that are p-th power Lebesgue integrable.That is 
$$L^p[0,1]:\left \lbrace x : [0,1] \rightarrow \mathbb{R} : \int_0^1 |x(t)|^p dt < \infty \right \rbrace.$$   We consider the 2-norm. An n-norm can be similarly defined.  
The following 2-norm on the space $L^p[0,1]$ is due to G\"{a}hler. ( See \cite{Gah1}, \cite{Gah2} and \cite{Gah3}.)
For $x_1,x_2 \in L^p[0,1],$
$$ \|x_1,x_2\|_p^G := \sup_{y_1,y_2 \in L^q[0,1],\|y_1\|_q\leq 1,\|y_2\|_q\leq 1} \begin{vmatrix} \int_0^1 x_1(u) y_1(u) du & \int_0^1 x_2(u) y_1(u) du  \\ \int_0^1 x_1(u) y_2(u) du  & \int_0^1 x_2(u) y_2(u) du  \end{vmatrix} $$ Here $L^q[0,1]$ is the usual dual of
the space $L^p[0,1].$
\noindent
The 2-norm introduced by Gunawan \cite{Gun6} and \cite{Sak} on the space $L^p[0,1]$ takes the following form
 
$$ \|x_1,x_2\|_p^H :=\left ( \frac{1}{2!}\int_0^1 \int_0^1\begin{vmatrix} x_1(u) & x_1(v) \\ x_2(u) & x_2(v) \end{vmatrix}^p  du dv\right )^\frac{1}{p}.$$ \noindent
\noindent
Any real valued function $f$ on $X^n$, is called n-functional on $X.$ Further if $f$ satisfies
\begin{enumerate}
	\item $f(x_1+y_1,\ldots,x_n+y_n) =\displaystyle \sum_{h_i \in { x_i,y_i },1 \leq i \leq n} f(h_1,\ldots,h_n),$
	\item $f(\alpha_1 x_1,\ldots,\alpha_n x_n)=\alpha_1\ldots\alpha_nf(x_1,\ldots,x_n), $

\end{enumerate}
	is called multilinear n-functional on $X.$\\
	An n-functional on a normed space $X$ is said to be bounded on $X$ if there is a constant $K>0$ such that
	$$ |f(x_1,x_2,\ldots,x_n)| \leq K \|x_1\|\ldots\|x_n\| $$
\noindent	
	If $X$ is equipped with the n-norm then n-functional $f$ is said to be bounded if 
	$$ |f(x_1,\ldots,x_n)| \leq K \|x_1,\ldots,x_n\|$$
	
	\begin{remark}
		Every bounded multilinear n-functional $f$ on an n-normed space $X$ satisfies
		$$f(x_1,\ldots,x_n)=0,$$ whenever $x_1,\ldots,x_n$ are linearly dependent. Also $f$ is antisymmetric, that is $$f(x_1,\ldots,x_n)= sgn(\sigma) f(x_{\sigma(1)},\ldots, x_{\sigma(n)})$$ for any $x_1,x_2,\ldots,x_n \in X$ and for any permutation $\sigma$ of $\left \lbrace1,2,\ldots,n\right \rbrace$
	\end{remark} \noindent
The space of bounded multilinear n-functionals on $(X,\|.\|)$ is called the n dual space of $(X,\|.\|)$. The norm on this space is given by
$$ \|f\|_{n,1} := \sup_{\|x\|_1,\ldots,\|x\|_n\neq 0} \frac{|f(x_1,\ldots,x_n)|}{\|x_1\|\ldots \|x_n\|}$$\noindent
whereas 
\noindent
The space of bounded multilinear n-functionals on $(X,\|.,\ldots,.\|)$ is called the $n-$dual space of $(X,\|.\|)$. The norm on this space is given by

$$ \|f\|_{n,n} := \sup_{\|x_1,\ldots,x_n\|\neq 0} \frac{|f(x_1,\ldots,x_n)|}{\|x_1,\ldots,x_n\|}$$
The n-dual spaces of the sequence space was considered by Pangalela and Gunawan \cite{Yos1}.
He considered and identified the n-dual spaces with respect to G\"{a}hler norm and the Gunawan norm. He worked on the more general normed spaces and studied the n-dual in \cite{Yos3}. But an explict computation and identification of the 2-dual space of $L^p[0,1]$ with the natural $\|.\|_p$ norm, G\"{a}hler norm and Guanawan norm were not considered. 
In this paper, we shall consider the 2-dual space of $L^p[0,1].$ The n-dual space computation can be done on similar lines.

\section{THE 2-DUAL SPACES OF $L^p[0,1]$}

We shall here identify the 2-dual space of $L^p[0,1]$ as a normed space. We shall assume that $1\leq p < \infty$ and $q$ is the conjugate exponent of $p$. In other words $q$ satisfies
$\frac{1}{p}+ \frac{1}{q} =1.$
We introduce the following normed space  $Y_{[0,1] \times [0,1]}^q$ as follows :\\
$ Y_{[0,1] \times [0,1]}^q : = $
$$\left \lbrace \theta:[0,1] \times [0,1] \rightarrow \mathbb{R}, \text{measurable} : \|\theta\|_{Y_{[0,1] \times [0,1]}^q}:= \sup_{\|x\|_p=1} 
(\int_0^1|\int_0^1 x(u) \theta(u,v)du|^q dv )^\frac{1}{q} < \infty \right \rbrace.$$
\noindent
$\|\theta\|_{Y_{[0,1] \times [0,1]}^q}$ defines a norm on $Y_{[0,1] \times [0,1]}^q.$
\noindent
In a similar manner, one defines the space $ Y_{[0,1] \times [0,1]}^\infty$
$$\left \lbrace \theta:[0,1] \times [0,1] \rightarrow \mathbb{R}, \text{measurable,\; essentially\; bounded} : \|\theta\|_{Y_{[0,1] \times [0,1]}^\infty} < \infty \right \rbrace,$$
where $$ \|\theta\|_{Y_{[0,1] \times [0,1]}^\infty}:= \sup_{\|x\|_p=1} 
(\sup_{u\in[0,1]}|\int_0^1 x(u) \theta(u,v)du| ) < \infty. $$
We first prove the main result.
\begin{theorem}
 Let $1< p < \infty $, then the 2-dual space of $(L^p[0,1],\|.\|_p)$ is isometrically bijective to $(Y_{[0,1] \times [0,1]}^q,\|.\|_{Y_{[0,1] \times [0,1]}^q})$ 
\end{theorem}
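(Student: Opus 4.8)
The plan is to exhibit an explicit correspondence between kernels $\theta \in Y_{[0,1]\times[0,1]}^q$ and bounded multilinear $2$-functionals on $L^p[0,1]$, and to show it is an onto isometry. For $\theta \in Y_{[0,1]\times[0,1]}^q$ define the associated $2$-functional by
\[
f_\theta(x_1,x_2) := \int_0^1\Big(\int_0^1 x_1(u)\,\theta(u,v)\,du\Big)x_2(v)\,dv ,
\]
and write $(T_\theta x_1)(v) := \int_0^1 x_1(u)\,\theta(u,v)\,du$, so that $T_\theta$ is the integral operator with kernel $\theta$ and, by definition, $\|\theta\|_{Y^q}$ (shorthand for $\|\theta\|_{Y_{[0,1]\times[0,1]}^q}$) is exactly the operator norm of $T_\theta:L^p[0,1]\to L^q[0,1]$. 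The multilinearity of $f_\theta$ in the sense defined above is immediate from linearity of the integral in each slot.

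First I would check that $\Phi:\theta\mapsto f_\theta$ is a norm-decreasing map into the $2$-dual. By H\"older's inequality with conjugate exponents $p,q$,
\[
|f_\theta(x_1,x_2)| \le \|T_\theta x_1\|_q\,\|x_2\|_p \le \|\theta\|_{Y^q}\,\|x_1\|_p\,\|x_2\|_p ,
\]
so $f_\theta$ is bounded with $\|f_\theta\|_{2,1}\le\|\theta\|_{Y^q}$. For the reverse inequality, fix $x_1$ with $\|x_1\|_p=1$; since $q$ is conjugate to $p$, the sharpness of H\"older's inequality (attained by the normalised extremiser $x_2=\operatorname{sgn}(T_\theta x_1)\,|T_\theta x_1|^{q-1}$) gives $\sup_{\|x_2\|_p=1}|f_\theta(x_1,x_2)|=\|T_\theta x_1\|_q$. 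Taking the supremum over $\|x_1\|_p=1$ yields $\|f_\theta\|_{2,1}=\sup_{\|x_1\|_p=1}\|T_\theta x_1\|_q=\|\theta\|_{Y^q}$, so $\Phi$ is an isometry and in particular injective.

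It remains to prove surjectivity: every bounded $2$-functional $f$ arises as some $f_\theta$. For fixed $x_1$, the map $x_2\mapsto f(x_1,x_2)$ is a bounded linear functional on $L^p[0,1]$ of norm at most $\|f\|_{2,1}\|x_1\|_p$; since $1<p<\infty$, the duality $(L^p)^*=L^q$ produces a unique $g_{x_1}\in L^q[0,1]$ with $f(x_1,x_2)=\int_0^1 g_{x_1}(v)\,x_2(v)\,dv$ and $\|g_{x_1}\|_q\le\|f\|_{2,1}\|x_1\|_p$. Linearity of $f$ in the first slot makes $x_1\mapsto g_{x_1}=:Tx_1$ a bounded linear operator $T:L^p[0,1]\to L^q[0,1]$ with $\|T\|=\|f\|_{2,1}$.

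The step I expect to be the main obstacle is realising this operator $T$ as an \emph{integral} operator, i.e. producing a measurable $\theta$ with $Tx_1=T_\theta x_1$; once this is done, $\theta\in Y_{[0,1]\times[0,1]}^q$ with $\|\theta\|_{Y^q}=\|f\|_{2,1}$ and $f=f_\theta$, completing the proof. I would attempt to extract the kernel from the induced set function $(A,B)\mapsto f(\mathbf 1_A,\mathbf 1_B)$: setting $F(s,v):=(T\mathbf 1_{[0,s]})(v)$ and differentiating in $s$ by a Lebesgue/Radon--Nikodym argument (whose absolute-continuity hypothesis must be supplied by the $L^q$-bound on $g_{x_1}$) to obtain $\theta(u,v)=\partial_s F(s,v)\big|_{s=u}$, and then verifying $Tx_1(v)=\int_0^1 x_1(u)\,\theta(u,v)\,du$ first on step functions and then on all of $L^p[0,1]$ by density and continuity. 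This is genuinely the delicate point, since a bounded operator $L^p\to L^q$ need not a priori be an integral operator; the full force of the finiteness of the $Y^q$-norm is precisely what one needs to control the limiting process and guarantee that $\theta$ is an honest measurable function rather than a merely formal kernel.
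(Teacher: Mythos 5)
Your first half is correct and coincides with the paper's argument: the H\"older estimate giving $\|f_\theta\|_{2,1}\le\|\theta\|_{Y^q}$, and the use of $(L^p)^*=L^q$ for fixed $x_1$ to produce an operator $T:L^p[0,1]\to L^q[0,1]$ with $\|T\|=\|f\|_{2,1}$ (your sharp-H\"older extremiser $x_2=\operatorname{sgn}(T_\theta x_1)|T_\theta x_1|^{q-1}$, suitably normalised, even improves on the paper, which proves only the inequality in that direction and recovers equality from the converse). But the step you flag as the ``main obstacle'' --- realising $T$ as an integral operator --- is a genuine gap, and your sketched Radon--Nikodym differentiation cannot close it. Concretely, take $p=q=2$ and $f(x,y)=\int_0^1 x(t)y(t)\,dt$: this is a bounded bilinear $2$-functional since $|f(x,y)|\le\|x\|_2\|y\|_2$, the associated operator is $T=\mathrm{id}$, and $F(s,v)=(T\mathbf{1}_{[0,s]})(v)=\mathbf{1}_{[0,s]}(v)$ is, for fixed $v$, a step function of $s$ whose a.e.\ derivative in $s$ vanishes; your procedure therefore extracts $\theta\equiv 0$, which does not represent $f$. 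The absolute-continuity hypothesis you hoped the $L^q$-bound would supply in fact fails: the bound yields only $\|T\mathbf{1}_{[0,s]}-T\mathbf{1}_{[0,s']}\|_q\le\|f\|_{2,1}\,|s-s'|^{1/p}$, i.e.\ H\"older continuity in norm, not pointwise absolute continuity in $s$ with the correct density. Worse, no measurable kernel exists at all for this $f$: testing $f(\mathbf{1}_A,\mathbf{1}_B)=0$ for disjoint $A,B$ forces $\theta=0$ a.e.\ off the diagonal, hence a.e.\ on $[0,1]^2$, so $f_\theta=0\neq f$. Thus the surjectivity step --- and with it the statement itself, at least for $p=2$ --- fails for honest kernels: the $2$-dual is the space of \emph{all} bounded bilinear forms, canonically $\mathcal{L}(L^p,L^q)$, which strictly contains the integral operators.

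For comparison, the paper's own proof meets exactly the same obstacle and does not overcome it either: after obtaining $z(x)\in L^q$ from the Riesz representation, it simply sets $\theta(u,v)=z(x(u))(v)$, an expression that is not well defined, since $z$ acts on functions $x\in L^p[0,1]$ and not on the scalar values $x(u)$, and the resulting ``$\theta$'' would in any case depend on the particular $x$ used to define it. So your instinct correctly located the crux: your proposal is more honest than the paper in flagging the kernel-extraction step as delicate, but neither your sketch nor the paper's formula completes it, and the diagonal form above shows it cannot be completed as stated --- one would have to shrink the $2$-dual to the class of $2$-functionals induced by kernels for the claimed isometric identification with $Y^q_{[0,1]\times[0,1]}$ to hold.
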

\begin{proof}
Let $ \theta \in Y_{[0,1] \times [0,1]}^q.$ Define a 2-functional on $L^p[0,1] \times L^p[0,1]$ given by

$$ f(x,y) := \int_0^1 \int_0^1 x(u) y(v) \theta (u,v) du dv,$$
where $x,y \in L^p[0,1],$ with $\|x\|_p=\|y\|_p=1.$
On applying H\"{o}lder's inequality, we get
\begin{align*}
|f(x,y)| = | \int_0^1  y(v) ( \int_0^1 x(u) \theta (u,v) du)dv|\leq (\int_0^1 |y(v)|^p dv)^\frac{1}{p} (\int_0^1|\int_0^1 x(u) \theta(u,v)du|^q dv )^\frac{1}{q} 
\end{align*}
As a consequence
$$ |f(x,y)|\leq (\int_0^1|\int_0^1 x(u) \theta(u,v)du|^q dv )^\frac{1}{q}\leq \sup_{\|z\|_p=1}(\int_0^1|\int_0^1 z(u) \theta(u,v)du|^q dv )^\frac{1}{q}=\|\theta\|_{Y_{[0,1] \times [0,1]}^q} $$ 
Hence for $x,y \neq 0,$
$$\frac{|f(x,y)|}{\|x\|_p \|y\|_q}\leq \|\theta\|_{Y_{[0,1] \times [0,1]}^q}$$
As a result \begin{equation}\|f\|_{2,1}\leq  \|\theta\|_{Y_{[0,1] \times [0,1]}^q}\end{equation}
Conversely,  $f$ be a bounded 2-functional on $L^p[0,1]$. That is $f: L^p[0,1] \times L^p[0,1] \rightarrow \mathbb{R}$ is a bounded linear functional. Morever
$$|f(x,y)| \leq \|f\|_{2,1} \|x\|_p \|y\|_p.$$
Let $x \in L^P[0,1]$ with $\|x\|_p=1. $ 
Let $$f_{x} : L^p[0,1] \rightarrow \mathbb{R}$$ be a functional given by
$$ f_{x}(y):= f(x,y).$$ Then 
$$\frac {|f_{x}(y)|} { \|y\|_p}=\frac {|f(x,y)|} { \|y\|_p} \leq \|f\|_{2,1} \|x\|_p.$$
$f_{x}$ is a bounded linear functional on $L^p[0,1]$ and
$$\|f_x\| \leq \|f\|_{2,1}.$$ $f_{x} \in L^p[0,1]^\prime.$
By the Riesz representation theorem, there exist $z(x) \in L^q[0,1] $ such that 
$$ f(x,y)=f_{x}(y) = \int_0^1 z(x)(v) y(v) dv. $$
and $$\|f_{x}\|= \|z(x)\|_q= \left (\int_0^1 |z(x)(v)|^q dv \right )^\frac{1}{q}] \leq \|f\|_{2,1}.$$
For each $x \in L^p[0,1]$ $$ \left (\int_0^1  |z(x)(v)|^q dv\right )^\frac{1}{q} \leq \|f\|_{2,1}.$$
$x \rightarrow z(x)$ is a Bounded linear map from $L^p[0,1]$ to $L^q[0,1].$
Set $$\theta(u,v)=z(x(u))(v).$$
$$ |\int_0^1 x(u) \theta(u,v) du | \leq \left(\int_0^1 |x(u)|^p du \right )^\frac{1}{p}  \left (\int_0^1  |\theta(u,v)|^q du\right )^\frac{1}{q}.$$
As a result
$$ |\int_0^1 x(u) \theta(u,v) du |^q \leq \|x\|_p^q \int_0^1  |\theta(u,v)|^q du. $$
So
$$\int_0^1 |\int_0^1 x(u) \theta(u,v) du |^q dv  \leq \|x\|_p^q \int_0^1 \int_0^1  |\theta(u,v)|^q du\; dv = \|x\|_p^q \int_0^1 \int_0^1  |\theta(u,v)|^q dv\; du $$
$$\int_0^1 |\int_0^1 x(u) \theta(u,v) du |^q dv  \leq \|x\|_p^q \int_0^1  \|f\|_{2,1}^q \; du $$ and

$$\left (\int_0^1 |\int_0^1 x(u) \theta(u,v) du |^q dv \right )^\frac{1}{q} \leq \|x\|_p  \|f\|_{2,1} $$

\begin{equation}\|\theta\|_{Y_{[0,1]\times[0,1]}^q}=\sup_{\|x\|_p=1}\left (\int_0^1 |\int_0^1 x(u) \theta(u,v) du |^q dv \right )^\frac{1}{q} \leq  \|f\|_{2,1}\end{equation}
Thus $$ \|f\|_{2,1}= \|\theta\|_{Y_{[0,1]\times[0,1]}^q}$$ from $(2.1)$ and $(2.2).$\\
Thus $f\rightarrow \theta$ is an isometry from the 2-dual space of $(L^p[0,1],\|.\|_p)$ to $(Y_{[0,1] \times [0,1]}^q,\|.\|_{Y_{[0,1] \times [0,1]}^q})$
\end{proof}
\noindent
In a similar manner, we can prove that the 2-dual space of $(L^1[0,1],\|.\|_1)$ is identified by $(Y_{[0,1] \times [0,1]}^\infty,\|.\|_{Y_{[0,1] \times [0,1]}^\infty})$
Now we shall discuss the 2-dual space of $\left (L^p[0,1],\|.,.\|_p^G \right ).  $ See \cite{Yos1}.
We need the concept of g-orthogonality on $L^p[0,1]$, where $g$ defined on
on $L^p[0,1] \times L^p[0,1]$ is given by the formula
$$ g(x,y):= \|x\|_p^{2-p} \int_0^1 |x(u)|^{p-1} \text{sgn}(x(u))y(u) du,\; x,y \in L^p[0,1].$$
(See \cite{Mili} and \cite{Yos1}.) 
Note that 
\begin{enumerate}
	\item $ g(x,x)=\|x\|_p^{2-p} \int_0^1 |x(u)|^{p-1} \text{sgn}(x(u))x(u) du=\|x\|_p^{2-p} \int_0^1 |x(u)|^{p-1} |x(u)| du$\\ As a consequence $g(x,x) =\|x\|_p^{2-p} \int_0^1 |x(u)|^p du=\|x\|_p^{2-p} \|x\|_p^p =\|x\|_p^2 .$ 
	\item $g(\alpha x, \beta y) = |\alpha|^{2-p} |\alpha|^{p-1}\text{sgn}\alpha \beta g(x,y)= \alpha \beta g(x,y)$
	\item $g(x,x+y)= \|x\|_p^2 +g(x,y) $.
	\item $|g(x,y)| \leq \|x\|_p^{2-p} \int_0^1 |x(u)|^{p-1} |y(u)| du=\|x\|_p^{2-p} \int_0^1 |x(u)|^\frac{p}{q} |y(u)| du \leq \|x\|_p^{2-p} \|x\|_p^\frac{p}{q} \|y\|_p $\\
	So  $|g(x,y)| \leq \|x\|_p \|y\|_p.$
	\item $g(x,y)$ is linear in $y.$
	
\end{enumerate}
As $g$ satisfies the above properties $g$ defines a semi inner product.
If $g(x,y)=0$, then we say that $x$ and $y$ are $g$  orthogonal and we write $x \perp_g y$ .
\noindent
Let $x \in L^p[0,1] $ and $Y=\left \lbrace y_1, y_2 \right \rbrace \subset L^p[0,1].$
Let $\Gamma(y_1,y_2)$ denote the Gram determinant $ \begin{vmatrix} g(y_1,y_1) & g(y_1,y_2) \\ g(y_2,y_1) & g(y_2,y_2) \end{vmatrix} $ 
If $\Gamma(y_1,y_2)\neq 0$ then the vector  $$x_Y:= -\frac{1}{\Gamma(y_1,y_2)}\begin{vmatrix}0 & y_1 & y_2 \\ g(y_1,x) & g(y_1,y_1) & g(y_1,y_2) \\ g(y_2,x) & g(y_2,y_1) & g(y_2,y_2) \end{vmatrix} $$ is the Gram Schmidth projection of vector $x$ on $Y.$ If $\left \lbrace x_1, x_2 \right \rbrace $ is linearly independent then 
$$ x_1^o= x_1,\;x_2^o= x_2 - \frac{g(x_1^o,x_2)}{g(x_1^o,x_1^o)} x_1^o $$ defines a left g-orthogonal sequence. Note that $x_1^o \perp_g x_2^o$. 
Define the volume of the 2-rectangle spanned by $x_1$ and $x_2$ by $$ V(x_1,x_2):= \|x_1^o\|\|x_2^o\|.$$ See \cite{Gun4}. If $x_1,x_2$ are linearly dependent define $V(x_1,x_2)=0.$ As in \cite{Gun4} it can be shown that
$$ V(x_1,x_2) \leq \|x_1,x_2\|_p^G$$ for all $x_1,x_2 \in L^p[0,1].$
Using the above result, the result  concerning the equivalence of the G\"{a}hler norm and Gunawan's norm can be exactly proved as \cite{Gun5}.
\begin{theorem}
	For $x_1, x_2 \in L^p[0,1]$ we have
	$$ 2^{\frac{1}{p}-1}\|x_1,x_2\|_p^H \leq \|x_1,x_2\|_p^G \leq 2^\frac{1}{p} \|x_1,x_2\|_p^H $$
	\end{theorem}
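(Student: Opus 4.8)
The plan is to handle the two inequalities separately, organising everything around the antisymmetric kernel
$$D(u,v) := x_1(u)x_2(v) - x_2(u)x_1(v),$$
so that $\|x_1,x_2\|_p^H = \left(\frac{1}{2}\int_0^1\int_0^1 |D(u,v)|^p\,du\,dv\right)^{1/p}$. Expanding the $2\times 2$ determinant in the Gähler norm, one verifies the identity
$$\begin{vmatrix} \int_0^1 x_1 y_1 & \int_0^1 x_2 y_1 \\ \int_0^1 x_1 y_2 & \int_0^1 x_2 y_2 \end{vmatrix} = \int_0^1\int_0^1 y_1(u)y_2(v)\,D(u,v)\,du\,dv,$$
so that $\|x_1,x_2\|_p^G$ is the supremum of this double integral over $\|y_1\|_q,\|y_2\|_q\le 1$ (and, replacing $y_1$ by $-y_1$, the supremum of its absolute value).

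For the upper bound I would apply H\"older's inequality on $[0,1]\times[0,1]$ to the displayed double integral. Since the tensor $y_1(u)y_2(v)$ has $L^q$-norm $\|y_1\|_q\|y_2\|_q\le 1$, this gives $\left|\int_0^1\int_0^1 y_1(u)y_2(v)D(u,v)\,du\,dv\right|\le \left(\int_0^1\int_0^1|D|^p\right)^{1/p}$, and rewriting the right-hand side as $2^{1/p}\|x_1,x_2\|_p^H$ yields $\|x_1,x_2\|_p^G\le 2^{1/p}\|x_1,x_2\|_p^H$. This step is routine.

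For the lower bound the key observation is that $\|x_1,x_2\|_p^H$ depends only on $D$, and $D$ is unchanged if $x_2$ is replaced by $x_2^o=x_2-\lambda x_1$ (adding a multiple of one row of the determinant to the other). Hence $\|x_1,x_2\|_p^H=\|x_1^o,x_2^o\|_p^H$ for the left g-orthogonal pair. Next, the pointwise estimate $|a_1b_2-a_2b_1|^p\le 2^{p-1}(|a_1b_2|^p+|a_2b_1|^p)$, applied with $a_i=x_i^o(u)$, $b_i=x_i^o(v)$ and then integrated via Fubini, gives $\|w_1,w_2\|_p^H\le 2^{1-1/p}\|w_1\|_p\|w_2\|_p$ for any $w_1,w_2$. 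Taking $w_i=x_i^o$ and recalling $V(x_1,x_2)=\|x_1^o\|_p\|x_2^o\|_p$ produces $2^{1/p-1}\|x_1,x_2\|_p^H\le V(x_1,x_2)$, after which I would invoke the already-established $V(x_1,x_2)\le\|x_1,x_2\|_p^G$ to conclude. The degenerate case of linearly dependent $x_1,x_2$, where $V$ and both norms vanish, is disposed of trivially.

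The main obstacle is the lower bound. A naive duality argument against the $L^p$-kernel $D$ only produces a general antisymmetric $L^q$ function, not a separable tensor $y_1\otimes y_2$, so one cannot test $D$ directly against its dual inside $\|x_1,x_2\|_p^G$. The device that circumvents this is to route the estimate through the volume $V$: one must notice that the Gunawan norm is invariant under the g-orthogonalisation, so that the crude factor $\|x_1\|_p\|x_2\|_p$—which is \emph{not} controlled by $\|x_1,x_2\|_p^G$ when $x_1,x_2$ are nearly dependent—is legitimately replaced by $V$, which is. I would take care to check that the $2^{p-1}$ convexity constant combines with the factor $\tfrac12$ in the Gunawan norm to give exactly $2^{1/p-1}$.
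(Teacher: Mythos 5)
Your argument is correct and is essentially the paper's own proof: the paper establishes $V(x_1,x_2)\le\|x_1,x_2\|_p^G$ and then appeals to the $\ell^p$ argument of Wibawa-Kusumah and Gunawan, which is exactly your scheme---H\"older against the tensor $y_1\otimes y_2$ for the upper bound, and for the lower bound the invariance of $\|\cdot\|_p^H$ under the $g$-orthogonalisation combined with the convexity estimate $\|w_1,w_2\|_p^H\le 2^{1-1/p}\|w_1\|_p\|w_2\|_p$ and $V(x_1,x_2)\le\|x_1,x_2\|_p^G$. Your constants and the handling of the linearly dependent case check out, so there is nothing to add.
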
\noindent 
Using the above result we have the following theorem which is similarly proved as in Theorem 2.3. of \cite{Yos1}

\begin{theorem}
	A bilinear 2-functional $f$ is bounded on $(L^p[0,1],\|.,.\|_p^G)$ if and only if $f$ is 
	antisymmetric and bounded on $(L^p[0,1],\|.\|_p)$. Furthermore, we have
	$$ \frac{1}{2}\|f\|_{2,1} \leq \|f\|_{2,2}^G \leq \|f\|_{2,1}$$ where
	$\|.\|_{2,2}^G$ is the norm on the 2-dual space of $(L^p[0,1],\|.\|_p^G). $
\end{theorem}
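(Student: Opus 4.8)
The plan is to prove the biconditional by establishing the two norm inequalities separately, each of which simultaneously supplies one implication. Two ingredients already at hand will carry the whole argument: the elementary estimate $\|x_1,x_2\|_p^G \leq 2\|x_1\|_p\|x_2\|_p$, obtained by expanding the defining $2\times 2$ determinant in the G\"{a}hler norm and applying H\"{o}lder's inequality to each of the two products $\int x_i y_j$; and the inequality $V(x_1,x_2) \leq \|x_1,x_2\|_p^G$ recorded just before the statement, which compares the $g$-orthogonal volume with the G\"{a}hler norm. Antisymmetry in the forward direction will be free from Remark 1.1.

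For the direction passing from the G\"{a}hler norm to the product norm, suppose the bilinear $f$ is bounded on $(L^p[0,1],\|.,.\|_p^G)$. By Remark 1.1 it vanishes on linearly dependent pairs and is antisymmetric. Using the determinant bound above,
$$|f(x_1,x_2)| \leq \|f\|_{2,2}^G \,\|x_1,x_2\|_p^G \leq 2\,\|f\|_{2,2}^G \,\|x_1\|_p\|x_2\|_p,$$
so $f$ is bounded on $(L^p[0,1],\|.\|_p)$; dividing by $\|x_1\|_p\|x_2\|_p$ and taking the supremum yields $\tfrac{1}{2}\|f\|_{2,1} \leq \|f\|_{2,2}^G$.

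For the converse, suppose $f$ is antisymmetric and bounded on $(L^p[0,1],\|.\|_p)$. Fix linearly independent $x_1,x_2$ and pass to the left $g$-orthogonal sequence $x_1^o = x_1$, $x_2^o = x_2 - \frac{g(x_1^o,x_2)}{g(x_1^o,x_1^o)}\,x_1^o$. Writing $x_2 = x_2^o + c\,x_1^o$ with $c = \frac{g(x_1^o,x_2)}{g(x_1^o,x_1^o)}$ and expanding by bilinearity, the term $c\,f(x_1^o,x_1^o)$ drops out because $f$ vanishes on the linearly dependent pair $(x_1^o,x_1^o)$, so $f(x_1,x_2) = f(x_1^o,x_2^o)$. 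Boundedness on $(L^p[0,1],\|.\|_p)$ then gives
$$|f(x_1,x_2)| = |f(x_1^o,x_2^o)| \leq \|f\|_{2,1}\,\|x_1^o\|_p\|x_2^o\|_p = \|f\|_{2,1}\,V(x_1,x_2) \leq \|f\|_{2,1}\,\|x_1,x_2\|_p^G.$$
On linearly dependent pairs both sides vanish, so the inequality holds for all $x_1,x_2$; dividing and taking the supremum yields $\|f\|_{2,2}^G \leq \|f\|_{2,1}$. Combining the two inequalities produces the displayed chain.

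The only genuinely delicate step is the reduction $f(x_1,x_2)=f(x_1^o,x_2^o)$: it requires both the bilinearity of $f$ and, crucially, the vanishing on dependent pairs (equivalently antisymmetry) to discard the off-diagonal contribution, which is exactly why antisymmetry must be carried as a standing hypothesis in the converse direction and cannot be dropped. Everything else reduces to routine manipulation once the bound $\|x_1,x_2\|_p^G\le 2\|x_1\|_p\|x_2\|_p$ and the volume inequality $V(x_1,x_2)\le\|x_1,x_2\|_p^G$ are in place.
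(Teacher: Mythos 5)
Your proof is correct and takes essentially the same route the paper intends: the paper gives no written-out proof but defers to Theorem 2.3 of \cite{Yos1}, whose argument is precisely your two-sided estimate --- the determinant expansion yielding $\|x_1,x_2\|_p^G \leq 2\|x_1\|_p\|x_2\|_p$ (with Remark 1.1 supplying antisymmetry) for $\frac{1}{2}\|f\|_{2,1}\leq\|f\|_{2,2}^G$, and the Gram--Schmidt reduction $f(x_1,x_2)=f(x_1^o,x_2^o)$ combined with the inequality $V(x_1,x_2)\leq\|x_1,x_2\|_p^G$ recorded just before the theorem for $\|f\|_{2,2}^G\leq\|f\|_{2,1}$. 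Your specialization of that cited argument to $n=2$ is complete and has no gaps.
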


To identify the dual space of $(L^p[0,1],\|.\|_p^G)$ consider the subspace of $Y_{[0,1] \times [0,1]}^q $. Define a subspace $Z_{[0,1] \times [0,1]}^q $ to be all $\theta:[0,1]\times [0,1]\rightarrow \mathbb{R}$ measurable such that $\theta(u,v)=-\theta(v,u).$  $Z_{[0,1] \times [0,1]}^q $ can be viewed as a normed space equipped with norm inherited from $Y_{[0,1] \times [0,1]}^q $ We have shown that the 2-dual space of  $(L^p[0,1],\|.\|_p)$ is isometrically isomorphic to $(Y_{[0,1] \times [0,1]}^q,\|.\|_{Y_{[0,1] \times [0,1]}^q}).$ Hence the space of antisymmetric bounded bilinear 2-functionals on $(L^p[0,1],\|.\|_p)$ can be identified with $(Z_{[0,1] \times [0,1]}^q,\|.\|_{Y_{[0,1] \times [0,1]}^q})$. We now present the following Corollaries.

\begin{corollary}
	The function $\|.\|_{Y_{[0,1] \times [0,1]}^q}^G$ on $Z_{[0,1] \times [0,1]}^q$ defined by
$$ \|.\|_{Y_{[0,1] \times [0,1]}^q}^G:= \sup_{\|x,y\|_p^G \neq 0} \frac{|\int_0^1 \int_0^1 x(u) y(v) \theta(u,v) du dv|}{\|x,y\|_p^G}$$ defines a norm on $Z_{[0,1] \times [0,1]}^q$.
Further,  $\|.\|_{Y_{[0,1] \times [0,1]}^q}$ and  $\|.\|_{Z_{[0,1] \times [0,1]}^q}^G$ are equivalent norms on $Z_{[0,1] \times [0,1]}^q$ with
$$ \frac{1}{2}\|\theta\|_{Y_{[0,1] \times [0,1]}^q}\leq \|\theta\|_{Z_{[0,1] \times [0,1]}^q}^G \leq \|\theta\|_{Y_{[0,1] \times [0,1]}^q} $$ for all $ \theta \in Z_{[0,1] \times [0,1]}^q.$
\end{corollary}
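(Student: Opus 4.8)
The plan is to deduce this corollary directly from the isometric identification of Theorem 2.1 together with the norm-equivalence of Theorem 2.3, rather than to estimate the double integral by hand. Throughout, write $Y^q$ and $Z^q$ for $Y_{[0,1]\times[0,1]}^q$ and $Z_{[0,1]\times[0,1]}^q$. The central observation is that the map sending a kernel $\theta \in Z^q$ to the bilinear 2-functional
$$ f_\theta(x,y) := \int_0^1 \int_0^1 x(u) y(v)\, \theta(u,v)\, du\, dv $$
is, by Theorem 2.1, a linear bijection onto the bounded 2-functionals on $(L^p[0,1],\|.\|_p)$, and that the antisymmetry condition $\theta(u,v) = -\theta(v,u)$ corresponds under this map to the antisymmetry $f_\theta(x,y) = -f_\theta(y,x)$, as one sees by interchanging the roles of $u$ and $v$ in the integral. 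Hence, by Theorem 2.3, the elements of $Z^q$ correspond precisely to the bounded 2-functionals on $(L^p[0,1],\|.,.\|_p^G)$.

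First I would record the two translations that make everything else automatic. Theorem 2.1 asserts the identification is isometric, so for every $\theta \in Z^q$ we have $\|\theta\|_{Y^q} = \|f_\theta\|_{2,1}$. Next, substituting the integral representation of $f_\theta$ into the definition of $\|.\|_{2,2}^G$ (the norm on the 2-dual of the G\"ahler space, specialised to $n=2$) shows that the quantity $\|\theta\|_{Z^q}^G$ appearing in the statement is literally $\|f_\theta\|_{2,2}^G$. With these two identities in hand, the equivalence inequalities of the corollary are an immediate transcription of $\tfrac{1}{2}\|f_\theta\|_{2,1}\le \|f_\theta\|_{2,2}^G \le \|f_\theta\|_{2,1}$, supplied by Theorem 2.3, into the inequality $\tfrac12\|\theta\|_{Y^q}\le \|\theta\|_{Z^q}^G\le\|\theta\|_{Y^q}$.

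It then remains only to verify that $\|.\|_{Z^q}^G$ is genuinely a norm. Nonnegativity and absolute homogeneity are immediate from the supremum definition, and the triangle inequality follows from the linearity relation $f_{\theta_1+\theta_2}=f_{\theta_1}+f_{\theta_2}$ together with subadditivity of the supremum. For positive definiteness I would invoke the equivalence just established: if $\|\theta\|_{Z^q}^G = 0$ then the left inequality forces $\|\theta\|_{Y^q}=0$, and since $\|.\|_{Y^q}$ is already known to be a norm on $Y^q$ (vanishing of $\int_0^1 x(u)\theta(u,v)\,du$ for a.e.\ $v$ and all $x$ with $\|x\|_p=1$ forces $\theta=0$ a.e.), we obtain $\theta=0$. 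This uses the inequalities as numerical facts about nonnegative quantities, so there is no circularity.

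I expect no serious obstacle; the argument is essentially bookkeeping following from the two prior theorems, consistent with the placement of the statement as a corollary. The one point genuinely requiring care is confirming that the supremum defining $\|\theta\|_{Z^q}^G$, taken only over linearly independent pairs $(x,y)$ with $\|x,y\|_p^G\neq 0$, is the correct object to match against $\|f_\theta\|_{2,2}^G$. This is exactly the $n=2$ instance of the definition of $\|.\|_{2,2}$, and because $f_\theta$ is antisymmetric it vanishes on the excluded linearly dependent pairs in any case, so no information is lost in restricting the supremum; once this matching is checked, the corollary follows.
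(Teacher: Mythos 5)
Your proposal is correct and takes essentially the same route as the paper, which states this corollary without a separate argument precisely because---as you do---it reads the inequalities $\tfrac{1}{2}\|f\|_{2,1}\le \|f\|_{2,2}^G\le \|f\|_{2,1}$ of Theorem 2.3 through the isometric identification $\theta \mapsto f_\theta$ of Theorem 2.1, under which $Z_{[0,1]\times[0,1]}^q$ corresponds to the antisymmetric bounded bilinear 2-functionals. Your explicit checks (kernel antisymmetry versus functional antisymmetry, vanishing of $f_\theta$ on linearly dependent pairs so the restricted supremum loses nothing, and positive definiteness via the left-hand inequality without circularity) are exactly the bookkeeping the paper leaves implicit.
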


\begin{corollary}
	The 2-dual space of  $(L^p[0,1],\|.,.\|_p^G)$ is isometrically isomorphic to  \\$(Z_{[0,1] \times [0,1]}^q,\|.\|_{Y_{[0,1] \times [0,1]}^q}^G)$.
\end{corollary}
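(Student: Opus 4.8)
The plan is to read the statement off by combining the three facts already established, without any genuinely new estimate; all the analytic content has been front-loaded into Theorem 2.1, Theorem 2.3, and the preceding Corollary. First I would unwind the definitions. The 2-dual space of $(L^p[0,1],\|.,.\|_p^G)$ is, by definition, the space of bounded bilinear 2-functionals on the 2-normed space $(L^p[0,1],\|.,.\|_p^G)$, normed by $\|.\|_{2,2}^G$. By Theorem 2.3 a bilinear 2-functional is bounded with respect to $\|.,.\|_p^G$ if and only if it is antisymmetric and bounded with respect to $\|.\|_p$; so, as a set, this 2-dual is exactly the space of antisymmetric members of the $\|.\|_p$-2-dual.

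Next I would transport this through Theorem 2.1, which supplies an isometric linear bijection $f\mapsto\theta$ of the $\|.\|_p$-2-dual onto $(Y_{[0,1]\times[0,1]}^q,\|.\|_{Y_{[0,1]\times[0,1]}^q})$, realized by $f(x,y)=\int_0^1\int_0^1 x(u)y(v)\theta(u,v)\,du\,dv$. The one small computation I need is that antisymmetry of $f$ matches antisymmetry of the kernel $\theta$. Relabelling $u\leftrightarrow v$ under Fubini gives $f(y,x)=\int_0^1\int_0^1 x(u)y(v)\theta(v,u)\,du\,dv$, so the condition $f(x,y)=-f(y,x)$ for all $x,y$ is equivalent to $\int_0^1\int_0^1 x(u)y(v)[\theta(u,v)+\theta(v,u)]\,du\,dv=0$ for all $x,y\in L^p[0,1]$; testing against indicators $x=\mathbf{1}_A$, $y=\mathbf{1}_B$ (the same density argument underlying the injectivity half of the bijection in Theorem 2.1) forces $\theta(u,v)=-\theta(v,u)$ a.e., i.e.\ $\theta\in Z_{[0,1]\times[0,1]}^q$, and conversely. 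Hence the bijection of Theorem 2.1 restricts to a linear bijection of the antisymmetric $\|.\|_p$-bounded 2-functionals onto $Z_{[0,1]\times[0,1]}^q$ --- precisely the identification already recorded just before the preceding Corollary.

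It then remains only to check that this restricted bijection is isometric for the two norms in the statement, and here the point is that the target norm was defined to make this automatic. For $f$ corresponding to $\theta\in Z_{[0,1]\times[0,1]}^q$,
$$\|f\|_{2,2}^G=\sup_{\|x,y\|_p^G\neq0}\frac{|f(x,y)|}{\|x,y\|_p^G}=\sup_{\|x,y\|_p^G\neq0}\frac{\left|\int_0^1\int_0^1 x(u)y(v)\theta(u,v)\,du\,dv\right|}{\|x,y\|_p^G}=\|\theta\|_{Y_{[0,1]\times[0,1]}^q}^G,$$
the last equality being exactly the definition of $\|.\|_{Y_{[0,1]\times[0,1]}^q}^G$ given in the preceding Corollary. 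Thus $f\mapsto\theta$ is an isometric isomorphism from the 2-dual of $(L^p[0,1],\|.,.\|_p^G)$ onto $(Z_{[0,1]\times[0,1]}^q,\|.\|_{Y_{[0,1]\times[0,1]}^q}^G)$, which is the claim.

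The only delicate step, and hence the main obstacle, is the restriction step: confirming that the isometric bijection of Theorem 2.1 carries the antisymmetric subspace \emph{onto} $Z_{[0,1]\times[0,1]}^q$ and onto nothing larger. This hinges on the uniqueness of the kernel representing a given $f$ --- so that no symmetric part of $\theta$ can go undetected --- which is the density argument indicated above. Once that is secured, the isometry is immediate from the definition in the preceding Corollary; in particular, note that the equivalence constants $\tfrac{1}{2}$ and $1$ appearing in Theorem 2.3 and the preceding Corollary play no role here, since the identification with $(Z_{[0,1]\times[0,1]}^q,\|.\|_{Y_{[0,1]\times[0,1]}^q}^G)$ is genuinely isometric rather than merely an equivalence of norms.
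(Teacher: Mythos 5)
Your proposal is correct and follows essentially the same route the paper intends: the paper offers no separate proof of this corollary, deriving it directly from Theorem 2.3 (boundedness for $\|.,.\|_p^G$ $\Leftrightarrow$ antisymmetry plus $\|.\|_p$-boundedness), the isometry $f\mapsto\theta$ of Theorem 2.1, and the definition of $\|.\|_{Y_{[0,1]\times[0,1]}^q}^G$ in the preceding corollary. Your added verification that antisymmetry of $f$ corresponds exactly to $\theta(u,v)=-\theta(v,u)$ a.e.\ (via Fubini and testing against indicators) is a detail the paper leaves implicit in the sentence identifying the antisymmetric functionals with $Z_{[0,1]\times[0,1]}^q$, and it is a welcome tightening rather than a departure.
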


\begin{corollary}
	The function $\|.\|_{Y_{[0,1] \times [0,1]}^q}^H$ on $Z_{[0,1] \times [0,1]}^q$ defined by
	$$ \|.\|_{Y_{[0,1] \times [0,1]}^q}^H:= \sup_{\|x,y\|_p^H \neq 0} \frac{|\int_0^1 \int_0^1 x(u) y(v) \theta(u,v) du dv|}{\|x,y\|_p^H}$$ defines a norm on $Z_{[0,1] \times [0,1]}^q$.
	Further,  $\|.\|_{Y_{[0,1] \times [0,1]}^q}^H$ and  $\|.\|_{Z_{[0,1] \times [0,1]}^q}^G$ are equivalent norms on $Z_{[0,1] \times [0,1]}^q$ with
	$$ 2^{\frac{1}{p}-1}\|\theta\|_{Y_{[0,1] \times [0,1]}^q}^G\leq \|\theta\|_{Z_{[0,1] \times [0,1]}^q}^H \leq 2^{\frac{1}{p}}\|\theta\|_{Y_{[0,1] \times [0,1]}^q}^G $$ for all $ \theta \in Z_{[0,1] \times [0,1]}^q.$
\end{corollary}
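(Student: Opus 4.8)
The plan is to derive the entire statement directly from Theorem 2.2, which already records the pointwise comparison between the Gunawan norm $\|x,y\|_p^H$ and the G\"ahler norm $\|x,y\|_p^G$ on $L^p[0,1]$. Writing $I_\theta(x,y):=\int_0^1\int_0^1 x(u)y(v)\theta(u,v)\,du\,dv$ for brevity, I observe that both candidate functionals on $Z_{[0,1]\times[0,1]}^q$ are obtained by dividing the \emph{same} quantity $|I_\theta(x,y)|$ by $\|x,y\|_p^H$ and by $\|x,y\|_p^G$ respectively, and then taking the supremum over admissible pairs $(x,y)$. Thus the comparison of norms should reduce to the scalar comparison of denominators supplied by Theorem 2.2.

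First I would check that the two suprema range over the same index set. By the two-sided bound of Theorem 2.2, and since both constants $2^{1/p-1}$ and $2^{1/p}$ are strictly positive, $\|x,y\|_p^H=0$ if and only if $\|x,y\|_p^G=0$ (equivalently, exactly when $x,y$ are linearly dependent, by the first $n$-norm axiom). Hence the supremum defining $\|\theta\|_{Z_{[0,1]\times[0,1]}^q}^H$ is taken over the same family of pairs as that defining $\|\theta\|_{Y_{[0,1]\times[0,1]}^q}^G$ in the preceding corollary. Next I would rewrite Theorem 2.2 as $2^{-1/p}\|x,y\|_p^G\le\|x,y\|_p^H\le 2^{1-1/p}\|x,y\|_p^G$ and pass to reciprocals; for each fixed linearly independent pair this yields $2^{1/p-1}\,|I_\theta(x,y)|/\|x,y\|_p^G\le |I_\theta(x,y)|/\|x,y\|_p^H\le 2^{1/p}\,|I_\theta(x,y)|/\|x,y\|_p^G$. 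Taking the supremum over all such pairs through each inequality then gives precisely $2^{1/p-1}\|\theta\|_{Y_{[0,1]\times[0,1]}^q}^G\le\|\theta\|_{Z_{[0,1]\times[0,1]}^q}^H\le 2^{1/p}\|\theta\|_{Y_{[0,1]\times[0,1]}^q}^G$, the claimed equivalence.

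It remains to verify that $\|\cdot\|_{Y_{[0,1]\times[0,1]}^q}^H$ is in fact a norm. Homogeneity and the triangle inequality I would inherit from the linearity of $\theta\mapsto I_\theta(x,y)$ together with the usual supremum argument: from $|I_{\theta_1+\theta_2}(x,y)|\le|I_{\theta_1}(x,y)|+|I_{\theta_2}(x,y)|$, dividing by $\|x,y\|_p^H$ and taking the supremum yields subadditivity, while $|I_{\alpha\theta}(x,y)|=|\alpha|\,|I_\theta(x,y)|$ gives absolute homogeneity. Finiteness and positive-definiteness I would read off from the equivalence just proved: the upper bound gives $\|\theta\|_{Y_{[0,1]\times[0,1]}^q}^H\le 2^{1/p}\|\theta\|_{Y_{[0,1]\times[0,1]}^q}^G<\infty$, and the lower bound gives $\|\theta\|_{Y_{[0,1]\times[0,1]}^q}^H\ge 2^{1/p-1}\|\theta\|_{Y_{[0,1]\times[0,1]}^q}^G>0$ whenever $\theta\neq 0$, using that the G\"ahler-induced functional $\|\cdot\|_{Y_{[0,1]\times[0,1]}^q}^G$ is already known to be a norm from the preceding corollary.

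The argument is entirely routine once Theorem 2.2 is available, so there is no genuine obstacle. The only point demanding a moment's care is the coincidence of the two index sets of pairs $(x,y)$, which is what legitimizes taking suprema through the pointwise inequalities and obtaining a term-by-term norm equivalence rather than merely a comparison of two possibly distinct supremum domains.
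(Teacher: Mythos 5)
Your proposal is correct and follows the route the paper intends: the corollary is stated as an immediate consequence of Theorem 2.2, obtained exactly as you do by dividing $|\int_0^1\int_0^1 x(u)y(v)\theta(u,v)\,du\,dv|$ by the pointwise-equivalent denominators $\|x,y\|_p^H$ and $\|x,y\|_p^G$ and passing to suprema, with the norm axioms inherited from the preceding corollary. Your explicit verification that the two supremum index sets coincide (linear dependence characterizes the vanishing of both $2$-norms) is a detail the paper leaves implicit, but it is the same argument.
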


\begin{corollary}
	The 2-dual space of  $(L^p[0,1],\|.,.\|_p^H)$ is isometrically isomorphic to  \\$(Z_{[0,1] \times [0,1]}^q,\|.\|_{Y_{[0,1] \times [0,1]}^q}^H)$.
\end{corollary}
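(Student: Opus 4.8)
\smallskip
\noindent
The plan is to transcribe the argument already used for the G\"{a}hler $2$-norm (the corresponding Corollary for $\|.,.\|_p^G$), since every ingredient needed is available; the only modifications are to replace the G\"{a}hler $2$-norm by the Gunawan $2$-norm and to invoke the equivalence of Theorem 2.2. The guiding observation is that the underlying \emph{set} of bounded bilinear $2$-functionals does not depend on which of the two equivalent $2$-norms is used: in both cases it is exactly the space of antisymmetric bounded bilinear functionals on $(L^p[0,1],\|.\|_p)$, which has already been identified with $Z_{[0,1] \times [0,1]}^q$. Only the norm changes, and the norm $\|.\|_{Y_{[0,1] \times [0,1]}^q}^H$ appearing in the statement was shown to be a genuine norm on $Z_{[0,1] \times [0,1]}^q$ in the preceding Corollary.

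\smallskip
\noindent
First I would settle the set-theoretic identification. By Theorem 2.2 the $2$-norms $\|.,.\|_p^G$ and $\|.,.\|_p^H$ are equivalent, so a bilinear $2$-functional $f$ is bounded on $(L^p[0,1],\|.,.\|_p^H)$ if and only if it is bounded on $(L^p[0,1],\|.,.\|_p^G)$. By Theorem 2.3 the latter holds precisely when $f$ is antisymmetric and bounded on $(L^p[0,1],\|.\|_p)$. Theorem 2.1 then supplies the integral representation $f(x,y)=\int_0^1\int_0^1 x(u)y(v)\theta(u,v)\,du\,dv$ with $\theta\in Y_{[0,1] \times [0,1]}^q$, and the antisymmetry $f(x,y)=-f(y,x)$ forces the symmetric part of $\theta$ to vanish, i.e. $\theta(u,v)=-\theta(v,u)$, so that $\theta\in Z_{[0,1] \times [0,1]}^q$. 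Conversely every $\theta\in Z_{[0,1] \times [0,1]}^q$ defines, by the same formula, an antisymmetric bilinear functional which is bounded on $(L^p[0,1],\|.,.\|_p^H)$ because $\|.\|_{Y_{[0,1] \times [0,1]}^q}^H$ is finite on $Z_{[0,1] \times [0,1]}^q$ by the preceding Corollary. Hence $f\mapsto\theta$ is a linear bijection from the $2$-dual of $(L^p[0,1],\|.,.\|_p^H)$ onto $Z_{[0,1] \times [0,1]}^q$.

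\smallskip
\noindent
Next I would verify that this bijection is an isometry. The norm on the $2$-dual of $(L^p[0,1],\|.,.\|_p^H)$ is
$$ \|f\|_{2,2}^H = \sup_{\|x,y\|_p^H\neq 0}\frac{|f(x,y)|}{\|x,y\|_p^H}. $$
Substituting the representation $f(x,y)=\int_0^1\int_0^1 x(u)y(v)\theta(u,v)\,du\,dv$ into the right-hand side turns it into exactly the expression defining $\|\theta\|_{Y_{[0,1] \times [0,1]}^q}^H$ in the preceding Corollary. Therefore $\|f\|_{2,2}^H=\|\theta\|_{Y_{[0,1] \times [0,1]}^q}^H$, and $f\mapsto\theta$ is an isometric isomorphism onto $(Z_{[0,1] \times [0,1]}^q,\|.\|_{Y_{[0,1] \times [0,1]}^q}^H)$, as claimed.

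\smallskip
\noindent
Because the isometry is immediate once the representing kernel $\theta$ is in hand, I expect the only point needing genuine care to be the two-sided well-definedness of the correspondence: that a functional bounded with respect to $\|.,.\|_p^H$ really does yield a kernel lying in $Z_{[0,1] \times [0,1]}^q$ on which $\|.\|_{Y_{[0,1] \times [0,1]}^q}^H$ is finite, and conversely that each such kernel returns a $\|.,.\|_p^H$-bounded functional. This is exactly where the equivalence of the G\"{a}hler and Gunawan $2$-norms (Theorem 2.2) and the norm equivalence established in the preceding Corollary are used; everything else is a direct transcription of the G\"{a}hler case.
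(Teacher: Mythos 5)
Your proposal is correct and follows essentially the same route the paper intends: the corollary is a direct consequence of the identification of antisymmetric bounded bilinear functionals with $Z_{[0,1]\times[0,1]}^q$ (via Theorems 2.1 and 2.3), the equivalence of the G\"{a}hler and Gunawan $2$-norms (Theorem 2.2), and the definition of $\|.\|_{Y_{[0,1]\times[0,1]}^q}^H$ in the preceding corollary, under which the isometry $\|f\|_{2,2}^H=\|\theta\|_{Y_{[0,1]\times[0,1]}^q}^H$ is immediate once the representing kernel is in hand. Your reconstruction matches the paper's (implicit) argument step for step, including the observation that antisymmetry of $f$ forces the kernel to be antisymmetric.
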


The identification of n-dual spaces for $L^p[0,1]$ can be done in exaxtly same way.
{\bf Acknowledgement}
The author would like to thank UGC FRP, INDIA  for their support.

\end{document}